\newtheorem{THM}{Theorem}
\newtheorem{LEM}[THM]{Lemma}
\newtheorem{DEF}[THM]{Definition}
\newtheorem{CLM}[THM]{Claim}
\newtheorem{PROP}[THM]{Proposition}
\newtheoremstyle{CASE}{}{}{\itshape}{}{\itshape}{:}{.5em}{#1 \thmnote{#3}}
\theoremstyle{CASE}
\theoremstyle{remark}
\def\tG{\widetilde{G}}
\def\deg{\mbox{deg}}
\newcommand\abs[1]{\lvert #1\rvert}
\begin{document}

\onehalfspace

\title{Dynamic coloring of graphs having no $K_5$ minor}
\author{Younjin Kim \and Sang June Lee \and Sang-il Oum}
\address[Younjin Kim, Sang-il Oum]{Department of Mathematical Sciences, Korea Advanced Institute of Science and Technology (KAIST), 291 Daehak-ro
  Yuseong-gu Daejeon, 305-701 South Korea} 
\address[Sang June Lee]{Department of Mathematics, Duksung Women's University, Seoul, 132-714 South Korea}
\email{younjin@kaist.ac.kr}
\email{sanglee242@duksung.ac.kr, sjlee242@gmail.com}
\email{sangil@kaist.edu}

\thanks{ 
The first author was supported by Basic Science Research Program through the
National Research Foundation of Korea(NRF) funded by the Ministry of
Science, ICT \& Future Planning(2013R1A1A3010982). \\ $\phantom{}$ 
The second author was supported by Basic Science Research Program through the National Research Foundation of Korea(NRF) funded by the Ministry of Science, ICT \& Future Planning (2013R1A1A1059913). \\ $\phantom{}$ The last author was supported by Basic Science Research Program through the National Research Foundation of Korea(NRF) funded by the Ministry of
Science, ICT \& Future Planning (2011-0011653). } 
\date{March 15, 2015}
\begin{abstract}
  We prove that every simple connected graph with no $K_5$ minor
  admits a proper $4$-coloring such that
  the neighborhood of each vertex $v$ having more than one neighbor is not monochromatic,
  unless the graph is isomorphic to the cycle of length $5$.
  This generalizes the result by S.-J.~Kim, S.~J.~Lee,  and
  W.-J.~Park{~\cite{KLP2012}} on planar graphs.
\end{abstract}
\maketitle

\section{introduction}
In this paper, all graphs are assumed to be simple, meaning that they
have no loops and no parallel edges.
We say that a vertex $v$ of a graph $G$ is \emph{dynamic} with 
a proper coloring of $G$,
if either $v$ has a pair of neighbors having distinct colors
or $v$ has at most one neighbor.
 A \emph{dynamic $k$-coloring} of a graph $G$ is a proper (vertex) $k$-coloring of
$G$ such that every vertex is dynamic.
The \emph{dynamic chromatic number} $\chi_d(G)$ of a graph $G$ is the
minimum number $k$ such that $G$ has a dynamic $k$-coloring.

This concept was introduced by Montgomery~\cite{Montgomery01}. 
Dynamic chromatic numbers (and list dynamic chromatic
numbers) have been studied for various classes of graphs such as graphs of small maximum degree,
bipartite graphs, regular graphs, random graphs, and graphs embedded in a
surface~\cite{Montgomery01, Lai03, Lai06, Meng06, Akbari09, Lin10, Alishahi11, Kim11, Chen12, Jahanbekan*, KLP2012}.

Erd\H{o}s, Furedi, Hajnal, Komj\'{a}th, R\"{o}dl, and
Seress~\cite{Erdos86} initiated a similar but opposite concept called
a local $\ell$-coloring. A \emph{local $\ell$-coloring} is a proper
coloring such that the neighbors of each vertex receive at most $l$
colors. There are series of results in this concept as well~\cite{Korner05, Simonyi06, Simonyi07, Gvozdenovic08, Omoomi08, Simonyi09, Kiviluoto09, Hajiabolhassan09, Alishahi10, Simonyi11}.

Clearly, for every graph $G$, the number $\chi_d(G)$ is at least the chromatic number $\chi(G)$.  It is easy to check that $\chi_d(C_5)=5$ and $\chi(C_5)=3$, and hence,  $\chi_d(G)$ may be strictly larger than $\chi(G)$. 
Moreover, a graph with small chromatic number may have arbitrarily large dynamic chromatic
number; for instance, if $G$ is the graph obtained from $K_n$ by subdividing every edge, then $\chi_d(G)=n$ but $\chi(G)=2$.
This might suggest that the dynamic chromatic
number of a graph may be quite different from the usual chromatic number. 
However, it turns out that every connected planar graph except for $C_5$ has a dynamic $4$-coloring, if we assume the four color theorem.
\begin{THM}[S.-J. Kim, S. J. Lee, W.-J. Park~\cite{KLP2012}]\label{thm:planar}
  If $G$ is a connected planar graph other than $C_5$, then $G$ is
  dynamically $4$-colorable.
\end{THM}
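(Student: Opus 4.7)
The plan is to proceed by induction on $|V(G)|$, using the Four Color Theorem as the source of proper $4$-colorings and exploiting the fact that every planar graph has a vertex of degree at most $5$. Assume $G$ is a minimum counterexample: a connected planar graph, $G \neq C_5$, admitting no dynamic $4$-coloring, while every smaller connected planar graph (other than $C_5$) does. The goal is to find a reducible local configuration in $G$ and derive a contradiction.

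First I would dispose of low-degree vertices. If $G$ has a vertex $v$ of degree $1$, then $G - v$ is a smaller connected planar graph; apply induction (or handle $G - v \cong C_5$ explicitly) to dynamically $4$-color $G - v$, and extend to $v$ by assigning any color distinct from the color of its unique neighbor. Then $v$ is automatically happy, and its neighbor $u$ is either already happy in $G - v$ or can be made happy by choosing the color of $v$ appropriately. For a degree-$2$ vertex $v$ with neighbors $u,w$, distinguish whether $uw \in E(G)$ or not: in the former case a direct extension argument works; in the latter case one suppresses $v$ to reduce to the planar graph $G' = (G - v) + uw$, applies induction, and then recolors $v$ with one of the available colors chosen to keep $u$, $w$, and $v$ happy.

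The main work lies in handling vertices of degree $3$, $4$, or $5$. For each such configuration, one would apply the Four Color Theorem to a suitably modified smaller graph (with part of $N(v) \cup \{v\}$ contracted or deleted) and show that every dynamic $4$-coloring of the smaller graph extends to $G$. When the naive extension fails because some vertex becomes unhappy, Kempe-chain swaps come in: switching colors along a bichromatic subgraph frees a color at a key neighbor, and planarity of the Kempe components ensures the swap stays controllable and does not propagate into a problematic region.

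The hard part will be the degree-$4$ and degree-$5$ analysis, where many color patterns on $N(v)$ must be eliminated case by case; fixing the unhappiness of one vertex by a local recoloring can reintroduce unhappiness at another. A clean way to organize this is a discharging argument: assign initial charges via Euler's formula, redistribute locally according to the adjacency structure around low-degree vertices and short faces, and conclude that $G$ must contain one of the listed reducible configurations, a contradiction. The singling out of $C_5$ in the theorem suggests that short cycles and vertices whose neighborhoods are independent sets require particularly delicate treatment, and identifying the precise family of reducible configurations that forces $C_5$ to be the lone exception is, I expect, the main obstacle.
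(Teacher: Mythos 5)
This statement is quoted from Kim--Lee--Park~\cite{KLP2012}; the present paper gives no proof of it, so there is nothing in the paper to compare your argument against. Judged on its own terms, your proposal is not a proof but a plan, and the plan leaves the essential content unestablished. You explicitly defer the degree-$3$, degree-$4$, and degree-$5$ cases (``the hard part will be the degree-$4$ and degree-$5$ analysis''), you do not exhibit a single reducible configuration for those cases, and you do not carry out the discharging that would show a minimum counterexample must contain one. Appealing to ``Kempe-chain swaps'' that ``stay controllable'' is precisely the step that famously fails with four colors (this is the flaw in Kempe's argument for the four color theorem when two chains through a degree-$5$ vertex cross), so it cannot be waved at; for dynamic coloring the situation is worse, since a swap that fixes properness can destroy happiness at vertices far from the configuration.

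Even the low-degree cases you do sketch have gaps. In the degree-$2$ suppression step, after dynamically $4$-coloring $G'=(G-v)+uw$ you must give $v$ a color outside $\{c(u),c(w)\}$ that simultaneously keeps $u$ and $w$ happy; if $u$ has exactly one other neighbor colored $3$ and $w$ has exactly one other neighbor colored $4$ (with $c(u)=1$, $c(w)=2$), no such color exists, so the extension is not automatic and needs a genuine recoloring argument or a different reduction. You also need to rule out the subtle interaction with $C_5$: it is not enough to ``handle $G'\cong C_5$ explicitly'' once, because the exceptional graph can reappear at every reduction, and the fact that $\chi_d(C_5)=5$ means a deleted-vertex coloring of a $5$-cycle always leaves some vertex unhappy, which must be the vertex your reduction can repair. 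In short, the proposal identifies a plausible strategy but proves none of the statements that would make it work; as it stands it does not establish the theorem.
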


In this paper, we consider the dynamic chromatic numbers of graphs with no $K_5$
minor. Note that Wagner~\cite{Wagner37} proved that such graphs are
$4$-colorable, assuming the four color theorem. Our main theorem is as follows.

\begin{THM}\label{thm:main}
If $G$ is a connected graph other than $C_5$ having no $K_5$ minor, then $G$ is dynamically 4-colorable.
\end{THM}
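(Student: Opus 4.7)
The plan is to invoke \emph{Wagner's structural theorem}: every $3$-connected $K_5$-minor-free graph is either planar or the Wagner graph $V_8$, and more generally every $K_5$-minor-free graph is obtained from such $3$-connected pieces by repeated clique-sums along cliques of size at most $3$. Combined with Theorem~\ref{thm:planar} for the planar pieces and a direct verification of a dynamic $4$-coloring of $V_8$, this suggests an induction on $|V(G)|$ in which one reduces $G$ along a small clique cut.

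Concretely, I would take $G$ to be a smallest counterexample and aim to derive a contradiction from the existence of a separation of order at most $3$ whose shared vertex set $S$ is a clique. Writing $G = G_1 \cup G_2$ with $V(G_1) \cap V(G_2) = S$, one considers $G_i^+$, the graph $G_i$ together with all missing edges inside $S$, which is still $K_5$-minor-free and strictly smaller than $G$. Applying the induction hypothesis to each $G_i^+$, and then permuting colors on one side so the two dynamic $4$-colorings agree on $S$, yields at least a proper $4$-coloring of $G$. Degenerate situations — $G_i^+ \cong C_5$, a piece of order at most $4$, or a side reducing to a single edge — would be handled by direct constructions; in particular the case $G = V_8$ serves as a finite base case.

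The main obstacle is that this union need not be dynamic at the boundary: a vertex $v \in S$ that is happy in each $G_i^+$ may fail to be happy in $G$, because the $G_1$-neighbors and the $G_2$-neighbors of $v$ can all be monochromatic of the same color without any edge of $G$ forcing otherwise. To circumvent this I would strengthen the induction hypothesis to track, for each vertex $v$ of the separating clique, the palette appearing on its neighborhood — for instance, asking for a dynamic $4$-coloring in which every $v \in S$ already receives at least two distinct colors on $N(v)\setminus S$ whenever such neighbors exist, or in which the colors on $S$ itself can be freely prescribed. Preserving such an invariant through all of the $0$-, $1$-, $2$-, and $3$-sums, while simultaneously honoring the prescribed colors on $S$ and isolating the propagated $C_5$ exception, is where I anticipate the bulk of the technical work: a detailed case analysis on $|S|\in\{1,2,3\}$ and on the local palette at each $v\in S$, likely combined with Kempe-chain swaps on one side to break an unwanted monochromatic neighborhood.
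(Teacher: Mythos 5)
Your high-level strategy (minimum counterexample, Wagner/Halin structure, combine colorings across a separator of order at most $3$, with $V_8$ and the planar case as base cases) is the same as the paper's, and you correctly isolate the real difficulty: happiness can fail at the boundary when the two colorings are glued. But the proposal does not resolve that difficulty -- it defers it to a ``strengthened induction hypothesis'' that is neither made precise nor shown to be maintainable, and the versions you sketch are problematic. Asking that the colors on $S$ be freely prescribable is false in general: for instance, if some vertex of $B\setminus A$ has its entire neighborhood equal to $S$, then no dynamic coloring can make $S$ monochromatic, so not every prescription is realizable; and the $C_5$-type exceptions also obstruct free prescription. Asking that each $v\in S$ see two colors on $N(v)\setminus S$ is impossible when $v$ has only one neighbor on that side. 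A second unaddressed point: in a minimum counterexample the separator of a low-order separation need not induce a clique of $G$ at all, so you cannot simply quote the clique-sum form of Wagner's theorem; one must argue that the clique on $S$ can be added while preserving $K_5$-minor-freeness (the paper does this by exhibiting three disjoint paths from $S$ to a cycle in $G[A]$ and contracting).

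What the paper actually does instead of strengthening the induction hypothesis is worth noting, since it is the missing idea. First, it arranges (Lemmas~\ref{lem:int3conn} and \ref{lem:3cut}, by re-choosing the separation) that every vertex of $S=A\cap B$ has at least \emph{two} neighbors in $G[A]$; then a dynamic $4$-coloring $c_1$ of $G[A]$ alone already makes every vertex of $S$ happy in $G$, and the $B$-side only needs to be properly colored with all of $B\setminus A$ happy. Second, to force the $B$-side coloring to agree with $c_1$ on $S$, it does not permute colors of a coloring of $G_2^+$ with the full clique added; rather it colors an auxiliary minor of $G[B]$ obtained by adding edges between, or \emph{contracting together}, the vertices of $S$ according to the color pattern of $c_1$ on $S$ (e.g.\ $(G[B]+xy+yz)/xy$ when $c_1(x)=c_1(y)\neq c_1(z)$). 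Third, the only vertices of $B\setminus A$ whose happiness could be destroyed by these identifications are those with all neighbors in $S$, and these are excluded by the hypotheses (iii) of Lemmas~\ref{lem:2sep} and \ref{lem:3sep}, which are verified when the lemmas are applied. Without these three ingredients (or a genuine proof of your proposed invariant), the induction does not close, so as written the proposal has a real gap.
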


Our proof is based on the structural characterization of graphs with
no $K_5$ minor by Wagner~\cite{Wagner37}. Unlike the usual graph
coloring, combining coloring on both side is not easy, because we
should make sure that every vertex is dynamic.

Next, we consider graphs with no $K_t$ minor for a general $t$ and show the following. 
\begin{THM}\label{thm:large t} For every integer $t\geq 2$, the following hold:
  \begin{enumerate}[(i)]
  \item A graph with no $K_{t}$
    topological minor is dynamically $(10t^2+2)$-colorable.
  \item A graph with no $K_{t}$
    minor is dynamically $(\lfloor 16t\sqrt{\log_2 t}\rfloor+3)$-colorable.
  \end{enumerate}
\end{THM}
See Section~\ref{sec:discussion} for the proof of Theorem~\ref{thm:large t} and the related discussion.

\vskip 1em
\noindent \textbf{Organization:} In Section~\ref{sec:proof}, we prove Theorem~\ref{thm:main}.
In the proof, we will use two properties of minimum counterexamples, Lemmas~\ref{lem:int3conn} and~\ref{lem:3cut}, that are proved in Sections~\ref{sec:int3conn} and~\ref{sec:4conn}, respectively. In Section~\ref{sec:discussion}, we discuss a related question
motivated by Hadwiger's conjecture and prove Theorem~\ref{thm:large t}.

\vskip 1em
\noindent \textbf{Notation:} Let $G$ be a graph. Let $V(G)$ denote the set of vertices of $G$. Let $E(G)$ denote the set of edges of $G$, and let $e(G)=|E(G)|$. For $u,v\in V(G)$, let $u\sim v$ denote that $u$ is adjacent to $v$. Let $N_G(v)$ denote the set of neighbors of $v$ in $G$. Let $\deg_{_{G}}(v)$ denote the degree of $v$ in $G$. For a vertex $v$ and a vertex set $U$, let $e(v,U)$ denote the number of edges between $v$ and vertices of $U$, that is, $e(v,U)=|N_G(v)\cap U|$.
For $x,y\in V(G)$, let $G+xy$ be the graph obtained from $G$ by adding the edge $xy$, and let $G/xy$ be the graph obtained from $G$ by identifying $x$ and $y$.


\section{Proof of Theorem~\ref{thm:main}}\label{sec:proof}

In order to prove Theorem~\ref{thm:main}, we will suppose that there is a counterexample of Theorem~\ref{thm:main}, and then, imply a contradiction. To this end, we show several properties of minimum counterexamples of Theorem~\ref{thm:main}. For the first property, we use the following definition.
\begin{DEF}
A graph $G$ with $\abs{V(G)}>3$  is called \emph{internally $3$-connected} if the following hold:
\begin{enumerate}[\hskip 2em (a)]
\item $G$ is $2$-connected,
\item For every separation $(A,B)$ of order $2$, we have that
$|A\setminus B|=1$ or $|B\setminus A|=1$. 
\end{enumerate}
\end{DEF}

\begin{LEM}\label{lem:int3conn}
  If $G$ is a counterexample of Theorem~\ref{thm:main} with minimum
number of edges,
  then $G$ is internally $3$-connected.
\end{LEM}

Our proof of Lemma~\ref{lem:int3conn} is given in Section~\ref{sec:int3conn}. The following lemma is another property of minimum counterexamples of Theorem~\ref{thm:main}.

\begin{LEM}\label{lem:3cut}
  Let $G$ be a counterexample of Theorem~\ref{thm:main} with minimum
number of edges.
  Then, for each $X\subset V(G)$ with $|X|=3$, we have that  $G\setminus X$ has
  at most $2$ components having a vertex of degree at least $3$.
\end{LEM}

Our proof of Lemma~\ref{lem:3cut} is provided in Section~\ref{sec:4conn}.

On the other hand, in order to prove Theorem~\ref{thm:main}, we use two known results: one is about graphs with no $K_5$ minor, and the other is about dynamic colorings.  Halin \cite{Halin1967,Halin1968} proved that every non-planar graph
with no $K_5$ minor contains a subdivision of $V_8$ (see Figure~\ref{fig:Wagner graph})  as a subgraph
or it has  a set $X$ of three vertices such that $G\setminus X$ has at
least three components. (A slightly stronger version was proved by
K\'ezdy and McGuinness~\cite[Theorem 3.6]{KM1992} later.) Moreover, Halin observed the following theorem since a $3$-connected graph not containing $K_5$ minor but containing $V_8$ minor  is isomorphic to $V_8$.

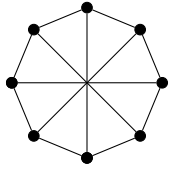
\begin{figure}
\begin{center}
  \tikzstyle{every node}=[circle,draw,fill=black!50,inner
  sep=0pt,minimum width=4pt]
  \begin{tikzpicture}
    \foreach \x in {0,2,4,6}
    \node [fill=black] (v\x) at (45*\x:1)  {} ;
    \foreach \x in {4,6}
    \node [fill=black] 
    (v\x) at (45*\x:1)  {} ;
    \foreach \x in {1,3}
    \node [fill=black] 
    (v\x) at (45*\x:1)  {} ;
    \foreach \x in {5,7}
    \node [fill=black] 
    (v\x) at (45*\x:1)  {} ;
    \draw (v0)--(v1)--(v2)--(v3)--(v4)--(v5)--(v6)--(v7)--(v0);
    \draw (v0)--(v4);
    \draw (v1)--(v5);
    \draw (v2)--(v6);
    \draw (v3)--(v7);
  \end{tikzpicture}

\end{center}\caption{The graph $V_8$.}\label{fig:Wagner graph}
\end{figure}

\begin{THM}[Halin \cite{Halin1967,Halin1968}]\label{thm:km}
  Every $3$-connected nonplanar graph with no $K_5$ minor
  is isomorphic to $V_8$ (see Figure~\ref{fig:Wagner graph})
  or has a set $X$ of three vertices such that $G\setminus X$ has at least
  $3$ components.
\end{THM}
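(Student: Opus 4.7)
My plan is to reduce the theorem to the observation that Halin mentions in the paragraph preceding the statement: a $3$-connected graph that contains $V_8$ as a minor but no $K_5$ minor is isomorphic to $V_8$. Combined with the first Halin--K\'ezdy--McGuinness result (every nonplanar $K_5$-minor-free graph contains either a subdivision of $V_8$ or a set $X$ of three vertices whose deletion leaves at least three components), the theorem follows at once: in the first case the $3$-connected hypothesis will let us upgrade the subdivision to $G\cong V_8$, and in the second case we are already in the desired conclusion. So the entire content of the proof lies in establishing the observation, which I will state as a lemma and prove.

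To prove the lemma, let $G$ be $3$-connected, $K_5$-minor-free, and contain $V_8$ as a minor. Fix a $V_8$-model $\{V_1,\dots,V_8\}$ in $G$: each $V_i\subseteq V(G)$ induces a connected subgraph, and for every edge $ij$ of $V_8$ there is an edge of $G$ joining $V_i$ to $V_j$. Among all such models, first minimise $\sum_i \abs{V_i}$ and then minimise $\abs{V(G)\setminus \bigcup_i V_i}$. A standard minimality argument shows that each $V_i$ may be taken as a tree (in fact a minimal Steiner tree) on its three ``attachment'' vertices to the three neighbours of $i$ in $V_8$, and that no vertex outside $\bigcup_i V_i$ can be absorbed into any branch set without destroying the model.

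The bulk of the work is to show that this model forces $V(G)=\bigcup_i V_i$, that every $V_i$ is a single vertex, and that no chords exist outside the edges used to witness $V_8$. All three assertions reduce to the same combinatorial fact: adding any single edge or any single subdivision vertex with one extra branch into $V_8$ creates a $K_5$ minor. For edges this is a finite check over the orbits of non-edges of $V_8$ under its automorphism group (which acts transitively on vertices and has few orbits of non-edges, essentially distinguishing the outer-cycle distances $2$, $3$, and the extra diagonal configurations); in each orbit one exhibits an explicit $K_5$ minor of $V_8+e$ by specifying four branch sets in $V_8$ and using $e$ as the fifth. The case of an extra vertex $v$ is handled by $3$-connectivity: there exist three internally disjoint paths from $v$ to the $V_8$ subdivision, contracting these paths and the corresponding $V_i$'s reduces the configuration to $V_8$ plus a new vertex of degree $3$, which by a further round of contractions in one branch set becomes $V_8$ plus a chord, already handled.

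The main obstacle is keeping the case analysis for the ``$V_8$ plus one chord'' lemma short; the symmetry group of $V_8$ (of order $16$, generated by the rotation of the outer $8$-cycle and the reflections commuting with the perfect matching of long diagonals) cuts the orbits of non-edges down to a small number, and in each a $K_5$ minor can be produced by a uniform recipe -- contract a consecutive arc of the outer cycle into a single vertex, use the chord as one of the $\binom{5}{2}$ edges, and read off the remaining four branch sets. Verifying that the recipe covers every orbit is the one computation one cannot avoid, but it is bounded and mechanical.
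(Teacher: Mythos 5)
Your proposal follows the same route the paper takes: the paper does not reprove this theorem but cites Halin, and its justification is exactly your first paragraph, namely the subdivision/three-cut dichotomy for nonplanar $K_5$-minor-free graphs combined with the observation that a $3$-connected $K_5$-minor-free graph with a $V_8$ minor is $V_8$ itself. Your additional sketch of that observation is sound in outline and goes beyond what the paper writes down: the automorphism group of $V_8$ has exactly two orbits of non-edges (outer-cycle distance $2$ and distance $3$), and in both cases an explicit $K_5$ minor of $V_8+e$ exists (e.g.\ for distance $3$ take branch sets $\{0\},\{3\},\{1,2\},\{4,5\},\{6,7\}$), so the chord lemma checks out. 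The one place your reduction of the extra-vertex and same-branch-path cases needs an explicit ingredient is the triangle-freeness of $V_8$: it guarantees that when a new vertex attaches to three branch sets, or a chord joins two branch paths, at least one of the relevant pairs of branch vertices is a non-edge of $V_8$, so contraction really does produce $V_8$ plus a chord rather than a multigraph. With that remark supplied, your argument is a complete proof of the result the paper only cites.
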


We use the following theorem shown by Lai, Montgomery, and Poon~\cite{Lai03}. (A stronger
result with dynamic choice numbers was later proved by Akbari, Ghanbari, and Jahanbekam~\cite{Akbari09}.)
\begin{THM}[Lai, Montgomery, and Poon~\cite{Lai03}]
  \label{thm:degreebound} If a connected graph $H$ has maximum degree at most
  $3$, then $H$ is dynamically $4$-colorable, unless $H$ is isomorphic
  to $C_5$.
\end{THM}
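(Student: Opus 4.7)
The plan is to prove Theorem~\ref{thm:degreebound} by strong induction on $\abs{V(H)}$. Small cases with at most a handful of vertices are dispatched by inspection (paths, $K_4$, $C_3$, $C_4$, etc.). Cycles $C_n$ with $n\ge 3$ and $n\ne 5$ receive a separate explicit treatment: the pattern $1,2,3,1,2,3,\dots$ works when $3\mid n$, the pattern $1,2,3,4$ repeated works when $4\mid n$, and for the remaining lengths a short corrective block (such as $1,2,3,4,2$ at the end of a $1,2,3$-pattern) produces a dynamic $4$-coloring in which every consecutive pair of colors differs, so each degree-$2$ vertex has two neighbors of distinct colors and is happy.

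For the inductive step I would assume $H$ is connected, subcubic, and not a cycle, and then split on $\delta(H)$. If $\delta(H)=1$, pick a leaf $v$ with unique neighbor $u$. Either $H-v\cong C_5$, in which case $\abs{V(H)}=6$ and an ad hoc $4$-coloring can be written down by hand, or the induction hypothesis gives a dynamic $4$-coloring $c'$ of $H-v$. Extend $c'$ to $v$ by picking $c(v)\ne c(u)$; if $u$ has degree at most $1$ in $H-v$, also choose $c(v)$ different from the color of that neighbor so that $u$, which now has degree $2$ in $H$, becomes happy. Since at most two colors are forbidden out of four, this is always possible. If $\delta(H)=2$ but $H$ is not a cycle, select a degree-$2$ vertex $v$ adjacent to a vertex of degree $3$; apply the induction to $H-v$ (treating any $C_5$ component of $H-v$ explicitly) and extend to $v$, breaking any conflict at $v$ by a local color swap on a neighbor along a Kempe chain.

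The hard case is when $H$ is cubic, i.e., $3$-regular. Choose a vertex $v$ with neighbors $u_1,u_2,u_3$ and inductively dynamically $4$-color each component of $H-v$, where the $u_i$ now have degree $2$ and are therefore already happy in the inductive coloring. Any color for $v$ distinct from $\{c(u_1),c(u_2),c(u_3)\}$ is available since we have four colors; the obstruction is that $v$ itself is unhappy exactly when $c(u_1)=c(u_2)=c(u_3)$. The main obstacle in the proof is ruling out this monochromatic configuration: one must either select $v$ using a structural feature of $H$ (for instance a short cycle through $v$, a bridge incident with $v$, or a pair $u_i,u_j$ sharing a common neighbor in $H-v$ that forces their colors to differ), or perform a Kempe-chain exchange on one of the $u_i$ to break the monochromaticity without creating a new unhappy vertex elsewhere. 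Once this is arranged, the extension to $v$ yields a dynamic $4$-coloring of $H$, completing the induction.
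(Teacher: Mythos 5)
The paper does not actually prove Theorem~\ref{thm:degreebound}; it is imported verbatim from Lai, Montgomery, and Poon~\cite{Lai03}, so there is no in-paper argument to compare yours against. Judged on its own terms, your proposal is not yet a proof: the two places where the real difficulty of this theorem lives are exactly the two places you leave unresolved. In the cubic case you correctly isolate the obstruction --- after inductively coloring $H-v$, the vertex $v$ is unhappy precisely when $c(u_1)=c(u_2)=c(u_3)$ --- but you then write that one must ``either select $v$ using a structural feature \ldots{} or perform a Kempe-chain exchange,'' without carrying out either option, and neither is routine. For instance, recoloring $u_1$ is constrained by properness at its two other neighbors $w,w'$ and by the requirement that its color leave the common color of $u_2,u_3$; this can pin down the new color completely, and that forced color may destroy the happiness of $w$ or $w'$ (a vertex that was happy only because $u_1$ and another neighbor carried distinct colors can become unhappy when $u_1$ changes). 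So the monochromatic configuration is not ruled out, and that is the heart of the theorem.

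The same problem infects your $\delta(H)=2$ case. There $v$ has exactly two neighbors $u_1,u_2$ and is happy only if $c(u_1)\neq c(u_2)$; when $u_1$ and $u_2$ lie in the same component of $H-v$ and the inductive coloring makes them equal, you invoke ``a local color swap on a neighbor along a Kempe chain.'' Kempe exchanges do not in general preserve dynamic colorings: swapping colors $a$ and $b$ along a chain can make the neighborhood of a vertex outside the chain monochromatic, namely when its only two differently colored neighbors had colors $a$ and $b$ and exactly one of them lies in the chain. This is precisely the difficulty the present paper flags when it notes that combining colorings is delicate because every vertex must remain happy. Your leaf case and the explicit cycle colorings are fine, but until the $3$-regular case and the equal-colored-neighbors subcase of the degree-$2$ case are actually closed, the induction does not go through.
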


We are ready to prove Theorem~\ref{thm:main}.
\begin{proof}[Proof of Theorem~\ref{thm:main}]
 Recall that $G$ is a counterexample of Theorem~\ref{thm:main} with minimum number of edges. Theorem~\ref{thm:planar} gives that $G$ is nonplanar.
  Let $\tG$ be the graph obtained from $G$ by
replacing each induced path of $G$ of length at least 2 with an edge.
Note that $G$ is a subdivision of $\tG$, and hence, $\tG$ is nonplanar and has no $K_5$ minor.

We claim that $\tG$ is $3$-connected. For a contradictory proof, suppose that $\tG$ has a separation $(A,B)$ with $A\cap B=\{x,y\}$ that separates vertices $u\in A$ and $v\in B$. Since $G$ is internally $3$-connected (see Lemma~\ref{lem:int3conn}), we infer that $A\setminus B=\{u\}$ or $B\setminus A=\{v\}$. Without loss of generality, let $A\setminus B=\{u\}$. Since $G$ is internally $3$-connected, and hence, $2$-connected, we have that $N_{\tG}(u)=\{x,y\}$, which contradicts to the fact that $\tG$ has no induced path of length $2$.

Therefore, Theorem~\ref{thm:km} gives that $\tG$ is isomorphic to $V_8$
  or has a set $X\subset V(\tG)$ of $3$ vertices such that $\tG\setminus X$ has at
  least $3$ components. If there is such a $X\subset V(\tG)$, there exists a set $X'\subset V(G)$ of $3$ vertices such that 
 $G\setminus X'$ has at least $3$ components having a vertex of degree at least $3$, which contradicts to Lemma~\ref{lem:3cut}.
  Therefore, $\tG$ is isomorphic to $V_8$, and hence, $G$ is a
  subdivision of $V_8$. 
Theorem~\ref{thm:degreebound} gives that $G$ is dynamically $4$-colorable. Consequently, $G$ is not a counterexample of Theorem~\ref{thm:main}.
\end{proof}


\section{Proof of Lemma~\ref{lem:int3conn}}\label{sec:int3conn}

A \emph{separation} of a graph is a pair $(A,B)$ of subsets of
$V(G)$ such that $A\cup B=V(G)$, $A\setminus B\neq\emptyset, B\setminus A\neq\emptyset$ and  there is no edge between $A\setminus
B$ and $B\setminus A$.
The \emph{order} of a separation $(A,B)$ is $\abs{A\cap B}$.

\begin{LEM}\label{lem:2conn}
  If a graph  $G$ is a counterexample of Theorem~\ref{thm:main} with minimum
number of edges, then $G$ is $2$-connected.
\end{LEM}
\begin{proof} 
Suppose, for contradiction, that $G$ is not $2$-connected. Then, $G$ has a separation $(A,B)$ of order $1$. 
Let  $A \cap B=\{x\}${\color{blue}.} 
If $G[A]$ and $G[B]$ are isomorphic to $C_5$, then $G$ is a connected planar graph other than $C_5$.
Theorem~\ref{thm:planar} gives that $G$ is dynamically $4$-colorable, and hence, $G$ is not a counterexample of Theorem~\ref{thm:main}.
 Therefore,  without loss of generality, we may assume that $G[B]$ is not isomorphic to $C_5$.

 Since $G[B]\subsetneq G$, we have that $e(G[B])<e(G)$ and $G[B]$ has no $K_5$ minor. From the assumption that $G$ is a minimum counterexample of Theorem~\ref{thm:main},  we have that $G[B]$ has a
dynamic $4$-coloring or is isomorphic to $C_5$. Recalling that $G[B]\neq C_5$, we have that $G[B]$ has a dynamic $4$-coloring, namely $c_2$.

 Similarly, $G[A]$ has a dynamic $4$-coloring or is isomorphic to $C_5$. If $G[A]$ has a dynamic $4$-coloring, let $c_1$ be such a coloring. On the other hand, if $G[A]$ is isomorphic to $C_5$, let $V(G[A])=\{x,a,b,c,d\}$ and let $x\sim a$, $a\sim b$, $b\sim c$, $c\sim d$, and $d\sim x$. Set $c_1(x)=1$, $c_1(a)=2$, $c_1(b)=3$, $c_1(c)=4$, and $c_1(d)=2$.
Notice that, in both cases, every vertex of $G[A]$ other than $x$ is
dynamic with $c_1$. 

By permuting colors of $c_2$, we let $c_1(x)=c_2(x)$.
Since $G$ is connected,  there are neighbors of $x$, namely $y$ and $z$, in $G[A]$ and $G[B]$, respectively.
We may assume that $$c_1(y)\neq c_2(z)$$ by permuting colors in $c_2$ without changing the color $c_2(x)$.
Let $c$ be the coloring of $G$ such that, for $u\in V(G)$,
\[
 c(u)=
 \begin{cases}
    c_1(u) & \text{if } u\in V(G[A]) \\
    c_2(u) & \text{otherwise.} 
  \end{cases}
\]

 The definition of $c_1$ and $c_2$ gives that
$c$ is a proper $4$-coloring of $G$. Next, each vertex $v\in A\setminus B$ is dynamic with $c_1$ in $G[A]$.  Since $N_{G[A]}(v)=N_G(v)$ and $c=c_1$ in $A$, we have that $v$ is dynamic with $c$ in $G$. Similarly, each vertex $u\in B\setminus A$ is dynamic with $c$ in $G$.
Next, recall that $x\in A\cap B$ has two neighbors $y\in V(G[A])$ and $z\in V(G[B])$, and  $$c(y)=c_1(y)\neq c_2(z)= c(z),$$ and hence, $x$ is dynamic with $c$ in $G$. Therefore, $c$ is a dynamic $4$-coloring of $G$, and hence, $G$ is not a counterexample of Theorem~\ref{thm:main}, which completes our proof of Lemma~\ref{lem:2conn}.
\end{proof}

Now, we prove Lemma~\ref{lem:int3conn}.

\begin{proof}[Proof of Lemma~\ref{lem:int3conn}] 
Suppose, for contradiction, that $G$ is not internally $3$-connected. Lemma~\ref{lem:2conn} gives that $G$ is $2$-connected.
Since $G$ is $2$-connected but not internally $3$-connected, there exists a separation
$(A,B)$ of order $2$ such that $\abs{A\setminus B}>1$ and $\abs{B\setminus
  A}>1$.
Let $A\cap B=\{x,y\}$.

Recalling that $G$ is a counterexample of Theorem~\ref{thm:main}, it follows from Theorem~\ref{thm:planar} that $G$ is nonplanar. Consequently, $G[A]+xy$ or $G[B]+xy$ is nonplanar.
Without loss of generality, we assume that $G[A]+xy$ is
nonplanar. Note that, since $G[A]+xy$ is nonplanar, $\abs{A}\ge 5$. 
We also assume that $\abs{A}$ is
minimized.   

Next, we claim that $\deg_{_{G[A]}}(x)\geq 2$ and  $\deg_{_{G[A]}}(y)\geq 2$.  Indeed, since $G$ is $2$-connected, we have that $x$ and $y$ have at least one neighbor in $A\setminus B$.
Suppose, for contradiction, that $x$ has only one neighbor $x'$ in $A\setminus B$. Then, $(A',B')=(A\setminus \{x\}, B\cup \{x'\})$ is a separation of $G$ of order $2$. 
We have that
$$ |A'\setminus B'|=|A\setminus B|-1\geq |A|-3\geq 2$$
and 
$$ |B'\setminus A'|=|B\setminus A|+1\geq 2.$$
Also,  
$G[A']+x'y$ is nonplanar because $G[A]+xy$ is nonplanar.
Therefore, $(A',B')$ is a separation with $\abs{A'\setminus B'}>1$ and $\abs{B'\setminus A'}>1$ satisfying that $G[A']+x'y$ is nonplanar, which contradicts to the minimality of $\abs{A}$.

We have the following claim. 
\begin{CLM}\label{clm:2sep}
  Let $G$ be a counterexample of Theorem~\ref{thm:main} with minimum
 number of edges.
 Suppose that $(A,B)$ with $A\cap B=\{x,y\}$ is a separation of $G$ of order $2$ satisfying that $\deg_{_{G[A]}}(x)\geq 2$, $\deg_{_{G[A]}}(y)\geq 2$, and $G[A]+xy$ is nonplanar.
 Then, the following hold:
  \begin{enumerate}[\hskip 2em (i)]
  \item There is a vertex $z\in B\setminus A$ with $N_G(z)=A\cap B$.
  \item $x\nsim y$.
  \item For every $w\in A\setminus B$, we have $N_G(w)\neq A\cap B$.
  \end{enumerate}
\end{CLM}  
Our proof of Claim~\ref{clm:2sep} will be given after the proof of Lemma~\ref{lem:int3conn}.
  
  Claim~\ref{clm:2sep} \textit{(i)} gives that there is a vertex $z \in B\setminus A$ with $N_G(z)=A\cap B$. Let $(A'',B'')=(A\cup\{z\},
B\setminus\{z\})$. Note that $$N_G(z)=A\cap B=\{x,y\}=A''\cap B''.$$ On the other hand,  we have that 
\begin{itemize}
\item $\deg_{_{G[A'']}}(x)\geq \deg_{_{G[A]}}(x)\geq 2$, and 
 $\deg_{_{G[A'']}}(y)\geq \deg_{_{G[A]}}(y)\geq 2$,
\item $G[A'']+xy$ is nonplanar, since $G[A'']\supset G[A]$ and $G[A]+xy$ is nonplanar.
\end{itemize} 
Hence, Claim~\ref{clm:2sep} \textit{(iii)} with $(A'',B'')$ implies that, for every $w\in A''\setminus B''$, we have $N_G(w)\neq A''\cap B''$, which contradicts to the fact that $N_G(z)= A''\cap B''$ with $z\in A''\setminus B''$.
\end{proof}

It remains to prove Claim~\ref{clm:2sep}.

\begin{proof}[Proof of Claim~\ref{clm:2sep}]

Suppose, for contradiction, that there is a separation $(A,B)$ of $G$ of order $2$ such that one of \textit{(i)}--\textit{(iii)} does not hold. We will infer that $G$ is dynamically $4$-colorable, which contradicts to the assumption that $G$ is a counterexample of Theorem~\ref{thm:main}.


Note that $G[A]\subsetneq G$, and recall that $G$ is a minimum counterexample of Theorem~\ref{thm:main}. Hence, $G[A]$ is isomorphic to $C_5$ or has a dynamic $4$-coloring. The assumption that $G[A]+xy$ is nonplanar implies that $G[A]$ is not isomorphic to $C_5$. Therefore, $G[A]$ has a dynamic $4$-coloring, namely $c_1$.

  We claim that $G[B]$ has a proper $4$-coloring $c_2$ such that
  $c_2(x)=c_1(x)$, $c_2(y)=c_1(y)$, and every vertex in $B\setminus A$
  is dynamic with $c_2$ in $G[B]$. 
If the claim holds, let $c$ be a coloring of $G$ defined by   \[
  c(v)=
  \begin{cases}
    c_1(v) & \text{if } v\in V(G[A]), \\
    c_2(v) & \text{otherwise.}
  \end{cases}
  \]
The properties of $c_1$ and $c_2$ imply that $c$ is a proper $4$-coloring and every vertex of $V(G)\setminus\{x,y\}$ is dynamic with $c$. Recalling the assumption $\deg_{_{G[A]}}(x)\geq 2$ and $\deg_{_{G[A]}}(y)\geq 2$, the fact that $x$ and $y$ are dynamic with $c_1$ in $G[A]$ implies that $x$ and $y$ are dynamic with $c$ in $G$. Therefore, $c$ is a dynamically $4$-coloring of $G$.
 
It only remains to show the above claim, that is, $G[B]$ has a proper $4$-coloring $c_2$ such that
  $c_2(x)=c_1(x)$, $c_2(y)=c_1(y)$, and every vertex in $B\setminus A$
  is dynamic with $c_2$ in $G[B]$. 
  We consider two cases (one is the case where $c_1(x)\neq c_1(y)$ and the other is the case where $c_1(x)=c_1(y)$) separately.  
   
  \begin{itemize}
  \item \textbf{Case 1}: 
    \emph{Suppose that $c_1(x)\neq c_1(y)$.}
Without loss of generality, let $c_1(x)=1$ and $c_1(y)=2$. Let $G[B]^+=G[B]+xy$ be the graph obtained from $G[B]$ by adding the edge $xy$ (if $x$ and $y$ are nonadjacent). The graph $G[B]^{+}$ is a minor of $G$, and hence, $G[B]^+$
  has no $K_5$ minor. 
  Also, $e(G[B]^+)<e(G)$. Recalling that $G$ is a minimum counterexample of Theorem~\ref{thm:main}, we infer that $G[B]^+$  has a dynamic $4$-coloring or is isomorphic to $C_5$.
  
If $G[B]^+$ has a dynamic $4$-coloring $c^+$,  set $c_2=c^+$ (as a coloring of $G[B]$).  Since $x\sim y$ in $G[B]^+$, we have that $c_2(x)\neq c_2(y)$, and hence, by permuting colors of $c_2$, we have that $c_2(x)=1$ and $c_2(y)=2$. On the other hand, if $G[B]^{+}$ is isomorphic to $C_5$, let $x,a,b,c,y$ be the vertices of $G[B]^{+}$ in the cyclic order. Set $c_2(x)=1$, $c_2(a)=2$, $c_2(b)=3$, $c_2(c)=4$, and $c_2(y)=2$  (as a coloring of $G[B]$).

One can easily see that $c_2$ is a proper $4$-coloring with $c_2(x)=1=c_1(x)$ and $c_2(y)=2=c_1(y)$.
 Next, each vertex of $B\setminus A$ is dynamic with $c_2$ in $G[B]^+$.
 Since each vertex $u\in B\setminus A$ satisfies $N_{G[B]^+}(u)=N_{G[B]}(u)$, each vertex of $B\setminus A$ is dynamic with $c_2$ in $G[B]$. 
  
\vskip 1em

\item \textbf{Case 2}:
\emph{Suppose that $c_1(x)=c_1(y)$.}
Without loss of generality, we may assume that $c_1(x)=c_1(y)=1$. Let $G[B]^*=(G[B]+xy)/xy$ denote the graph obtained from $G[B]$ by identifying $x$ and $y$.
The graph $G[B]^*$ is a minor of $G$, and hence, $G[B]^*$
  has no $K_5$ minor. 
  Also, $e(G[B]^*)<e(G)$. Recalling that $G$ is a minimum counterexample of Theorem~\ref{thm:main}, we infer that $G[B]^*$  has a dynamic $4$-coloring or is isomorphic to $C_5$.

If $G[B]^*$ has a dynamic $4$-coloring $c^*$,  set $c_2=c^*$ (as a coloring of $G[B]$).  Since $x=y$ in $G[B]^*$, we have that $c_2(x)=c_2(y)$, and hence, by permuting colors of $c_2$, we have that $c_2(x)=c_2(y)=1$. On the other hand, if $G[B]^*$ is isomorphic to $C_5$, 
let $x=y,a,b,c,d$ be the vertices of $G[B]^*$ in the cyclic order. Set $c_2(x)=c_2(y)=1$, $c_2(a)=2$, $c_2(b)=3$, $c_2(c)=4$, and $c_2(d)=2$ (as a coloring of $G[B]$).

We have that $c_1(x)=c_1(y)=c_2(x)=c_2(y)$.
Next, since $c_1(x)=c_1(y)$ and $c_1$ is proper in $G[A]$, we infer that $x\nsim y$ in $G[A]$, and hence, in $G[B]$. Hence, $c_2$ is a proper $4$-coloring of $G[B]$.

Next, we claim that every vertex in $B\setminus A$ is dynamic with $c_2$
even in $G[B]$ instead of $G[B]^*$.
Indeed, let $v\in B\setminus A$ be such that $\deg_{_{G[B]}}(v)\geq 2$. If $\deg_{_{G[B]^*}}(v)=1$, we have that $N_{G}(v)=\{x,y\}$. Since \textit{(ii)} or \textit{(iii)} does not hold, we infer that $c_1(x)\neq c_1(y)$, which implies a contradiction. Therefore, $\deg_{G[B]^*}(v)\geq 2$. Consequently, $v$ has two neighbors with distinct colors of $c_2$. Thus, $v\in B\setminus A$ is dynamic with $c_2$ in $G[B]$.
\end{itemize}

Cases 1 and 2 complete our proof of Claim~\ref{clm:2sep}.
\end{proof}


\section{Proof of Lemma~\ref{lem:3cut}}\label{sec:4conn}


\begin{proof}[Proof of Lemma~\ref{lem:3cut}]
	Suppose, for contradiction, that $G$ has a set $X=\{x_1,x_2,x_3\}$ such that $G\setminus X$ has
	  at least $3$ components (namely $C_1$, $C_2$, and $C_3$) having  a vertex of degree at least $3$. One of the followings holds:
\begin{itemize}
\item $|C_1|=|C_2|=|C_3|=1$. 
\item $|C_1|\geq 2$ without loss of generality.
\end{itemize}
 Set $C=C_1$, and let $A=V(G)\setminus V(C)$ and $B=X\cup V(C)$. Note that $A\cap B=X$.
 
Since $G\setminus X$ has
	  at least $2$ components, other than $C$, having a vertex of degree at least $3$, we have that $A\setminus B$ has at least $2$ vertices of degree at least $3$.
 
 Next, we claim that, for $i=1,2,3$, $$\deg_{_{G[A]}}(x_i)\geq 2.$$ 
 Recall that, for a vertex $v$ and a vertex set $U$, let $e(v,U)=|N_G(v)\cap U|$.   For a proof of the claim, it suffices to show that, for $i=1,2,3$ and $j=2,3$, 
 \begin{equation}\label{eq:x,C}e(x_i,C_j)\geq 1.\end{equation} We consider the case where  $j=2$.
\begin{itemize}
\item \textit{Suppose that $|C_2|=1$, that is, $C_2=\{v\}$.} Recalling that $C_2$ has a vertex of degree at least $3$, we have $\deg_{_{G}}(v)\geq 3$, and hence, $N_G(v)=\{x,y,z\}$. Thus,~\eqref{eq:x,C} holds for $i=1,2,3$ and $j=2$. 
\item \textit{Suppose that $|C_2|\geq 2$.} Suppose that there is an $i$ such that $e(x_i,C_2)=0$. Without loss of generality, let $e(x_1,C_2)=0$. Set $A'=V(G)\setminus V(C_2)$ and $B'=\{x_2,x_3\}\cup V(C_2)$. Then, $(A',B')$ is a separation of order $2$ satisfying that $|A'\setminus B'|\geq 2$ and $|B'\setminus A'|\geq 2$. This contradicts to the fact that $G$ is  internally $3$-connected (see Lemma~\ref{lem:int3conn}).
\end{itemize} Therefore,~\eqref{eq:x,C} with $j=2$ holds. Symmetrically, ~\eqref{eq:x,C} with $j=3$ holds.

  
  We have the following claim.
  \begin{CLM}\label{clm:3sep}
  Let $G$ be a counterexample of Theorem~\ref{thm:main} with minimum
number of edges.
Suppose that $(A,B)$ with $A\cap B=\{x,y,z\}$ is a separation of $G$ of order $3$ satisfying the following:
\begin{enumerate}[\hskip 2em (a)]
  \item $A\setminus B$ has at least $2$ vertices of degree at least $3$. 
   \item $\deg_{_{G[A]}}(x)\geq 2$, $\deg_{_{G[A]}}(y)\geq 2$, and $\deg_{_{G[A]}}(z)\geq 2$.  
\end{enumerate}

  Then, the following hold: 
  \begin{enumerate}[\hskip 2em (i)]
  \item There is a vertex $u\in B\setminus A$ with $N_G(u)\subset A\cap B$, 
    \item There is no edge in $N_G(u)$.
    \item  For every $w\in A\setminus B$, we have $N_G(w)\neq N_G(u)$.
  \end{enumerate}
\end{CLM}
 
 Our proof of Claim~\ref{clm:3sep} will be given after the proof of Lemma~\ref{lem:3cut}.

Note that the separation $(A,B)$ with $A=V(G)\setminus V(C)$ and $B=X\cup V(C)$ satisfies \textit{(a)} and \textit{(b)} of Claim~\ref{clm:3sep}, and hence, \textit{(i)--(iii)} of Claim~\ref{clm:3sep} hold.
Suppose that $|C|\geq 2$. Then, \textit{(i)} of Claim~\ref{clm:3sep} gives that there is a vertex $u\in C=B\setminus A$ satisfying $N_G(u)\subset X=A\cap B$. Then, $\{u\}\subsetneq V(C)$ is a component of $G\setminus X$, which contradicts to the fact that $C$ is a component with $|C|\geq 2$. Hence, we may assume that $|C|=1$.

From the definition of $C$, we have that $|C_1|=|C_2|=|C_3|=1.$ Let $C=C_1=\{v_1\}$ and $C_2=\{v_2\}$. Since $\deg_{_{G}}(v_1)\geq 3$, we infer that $N_G(v_1)=X$. Similarly, since $\deg_{_{G}}(v_2)\geq 3$, we also have that $N_G(v_2)=X$. This contradicts to \textit{(iii)} of Claim~\ref{clm:3sep}, which completes our proof of Lemma~\ref{lem:3cut}.
\end{proof}

It remains to prove Claim~\ref{clm:3sep}.

\begin{proof}[Proof of Claim~\ref{clm:3sep}]
Suppose, for contradiction, that $G$ has a separation $(A,B)$ with $A\cap B=\{x,y,z\}$ such that one of \textit{(i)--(iii)} does not hold.
We will infer that $G$ is dynamically $4$-colorable, and hence, $G$ is not a counterexample of Theorem~\ref{thm:main}. 

First, we claim that $G[B]+xy+yz+zx$ has no $K_5$ minor. Indeed,
 Lemma~\ref{lem:int3conn} gives that $G$ is internally $3$-connected, and hence, $G$ is $2$-connected. Therefore, for each vertex $v\in A\setminus B$, we have $\deg_{_{G[A]}}(v)\geq 2$. This together with the assumption \textit{(b)} yields that, for each vertex $v\in A$, we have $\deg_{_{G[A]}}(v)\geq 2$. Therefore, $G[A]$ has a cycle, namely $C$.
Since  $G$ is internally $3$-connected, there are $3$ vertex-disjoint paths from
  $A\cap B$ to $V(C)$. (Here, a vertex in $(A\cap B)\cap V(C)$ is counted as a path connecting $A\cap B$ to $V(C)$.) We obtain
  $K_3$ on $A\cap B$ by contracting edges in those paths and the cycle $C$. 
  This implies that 
 $G[B]+xy+yz+zx$
  is a minor of $G$, and hence, it has no $K_5$ minor.

The assumption \textit{(a)} gives that $G[A]$ is not isomorphic to $C_5$. Therefore, by the
  minimality assumption, $G[A]$ has a dynamic $4$-coloring $c_1$.

We claim that $G[B]$ has a proper $4$-coloring $c_2$ such that 
  $c_1(x)=c_2(x)$, $c_1(y)=c_2(y)$, $c_1(z)=c_2(z)$,
  and every vertex in $B\setminus A$ is dynamic with $c_2$  in $G[B]$,
  If this claim is true, then we have
  a proper $4$-coloring $c$ of $G$ as
  \[
  c(t)=
  \begin{cases}
    c_1(t) & \text{if } t\in V(G[A]) \\
    c_2(t) & \text{otherwise.}
  \end{cases}
  \]
 One may check that $c$ is a dynamic $4$-coloring of $G$ by the assumption \textit{(b)}.
  This contradicts to our assumption that $G$ is a counterexample,
  finishing the proof.
  
  To prove the claim, we consider the following three cases separately. 
\begin{itemize}
\item \textbf{Case 1}:
    \textit{Suppose that $c_1(x)$, $c_1(y)$, and $c_1(z)$ are distinct.
Recall that $G[B]+xy+yz+zx$ has no $K_5$ minor and we have $e(G[B]+xy+yz+zx)< e(G)$.   Since $G[B]+xy+yz+zx$ is not isomorphic to $C_5$, 
  it admits a dynamic $4$-coloring $c_2$.
   By permuting colors of $c_2$, we may let} $c_2(x)=c_1(x)$, $c_2(y)=c_1(y)$, $c_2(z)=c_1(z)$.
  Clearly, every vertex in $B\setminus A$ is dynamic with $c_2$  in $G[B]+xy+yz+zx$, and hence, in $G[B]$.
\vskip 1em

\item \textbf{Case 2}:
    \textit{Suppose that $c_1(x)=c_1(y)\neq c_1(z)$.}
  Set $G'=(G[B]+xy+yz)/xy$. Note that $G'$ has no $K_5$
  minor  and $e(G')<e(G)$. If $G'$ is not isomorphic to $C_5$, then we take a dynamic
  $4$-coloring $c_2$ of $G'$  to define a proper $4$-coloring of
  $G[B]$.  On the other hand, if $G'$ is isomorphic to $C_5$, 
  then let $x=y,a,b,c,z$ be
  the vertices of $G'$ in the cyclic order. We take $c_2(x)=c_2(y)=1,
  c_2(a)=2, c_2(b)=4, c_2(c)=3$, and $c_2(z)=2$.
  By permuting colors of $c_2$, we have that   $c_1(x)=c_2(x)$, $c_1(y)=c_2(y)$, and $c_1(z)=c_2(z)$.
  
We claim that every vertex in $B\setminus A$ is dynamic
  with $c_2$ even in $G[B]$ instead of $G'$. Indeed, let $v\in B\setminus A$ be such that $\deg_{_{G[B]}}(v)\geq 2$. If $\deg_{_{G'}}(v)=1$, then $N_{G[B]}(v)=\{x,y\}$. Since \textit{(ii)} or \textit{(iii)} does not hold, we infer that $c_1(x)\neq c_1(y)$. Therefore, $\deg_{_{G'}}(v)\geq 2$. Thus, $v$ has two neighbors with distinct coloring in $c_2$, and hence, $v$ is dynamic with $c_2$ in $G[B]$.

\vskip 1em
\item \textbf{Case 3}:
    \textit{Suppose that $c_1(x)=c_1(y)=c_1(z)$.}
Set $G''=(G[B]+xy+yz)/xy/yz$. Since $G''$ is a minor of $G$, the graph $G''$ has no $K_5$
  minor. Also, $e(G'')<e(G)$. If $G''$ is not isomorphic to $C_5$, then we take a dynamic
  $4$-coloring $c_2$ of $G''$ to define a proper $4$-coloring of
  $G[B]$. 
On the other hand, if $G''$ is isomorphic to $C_5$, 
  then let $x=y=z,a,b,c,d$ be the vertices of $G''$ in the cyclic
  order. 
  We take $c_2(x)=c_2(y)=c_2(z)=1, c_2(a)=2, c_2(b)=3, c_2(c)=4, c_2(d)=2$. 
 By permuting colors of $c_2$, we have that $c_1(x)=c_2(x)$, $c_1(y)=c_2(y)$, and $c_1(z)=c_2(z)$.

We claim that every vertex in $B\setminus A$ is dynamic with $c_2$ even in $G[B]$ instead of $G''$. Indeed, let $v\in B\setminus A$ be such that $\deg_{_{G[B]}}(v)\geq 2$. If $\deg_{_{G''}}(v)=1$, then $N_{G[B]}(v)\subset \{x,y,z\}$. 
Since \textit{(ii)} or \textit{(iii)} does not hold, we infer that $c_1(x)=c_1(y)=c_1(z)$ is not possible. Therefore, $\deg_{_{G''}}(v)\geq 2$. Thus, $v$ has two neighbors with distinct coloring in $c_2$, and hence, $v$ is dynamic with $c_2$ in $G[B]$.

   \end{itemize}
  
  Cases 1--3 completes our proof of Claim~\ref{clm:3sep}.
\end{proof}


\section{Graphs with no $K_t$ minor for a general $t$}
\label{sec:discussion}
Hadwiger's conjecture \cite{Hadwiger1943}
claims that graphs with no $K_ t$ minor is
$ (t-1)$-colorable. 
It is fairly easy to prove Hadwiger's conjecture for $ t\leq 4$.
The four color theorem is equivalent to Hadwiger's conjecture for $t=5$, shown by
Wagner~\cite{Wagner37},
and for $t=6$ shown by Robertson, Seymour, and Thomas~\cite{RST1993a}.
Hadwiger's conjecture remains open for all $t\ge 7$.

Our theorem on dynamic coloring can also be seen as an equivalent
theorem of the four color theorem, as there is only one exception
$C_5$ which is $3$-colorable. So we might ask the following:
\begin{quote}
  Is it true that every connected graph with no $K_t$ minor is dynamically
  $(t-1)$-colorable, except finitely many $(t-1)$-colorable graphs?
\end{quote}
This question is obviously true for $t=2$, 
and is true for $t=5$ by Theorem~\ref{thm:main}.

But unlike Hadwieger's conjecture, 
this question  turns out to be false when $t=3$ or $t=4$.
Obviously there are no dynamically $2$-colorable connected graphs having a
vertex of degree at least $2$ and therefore the question is false for
$t=3$.
For $t=4$, every cycle of length $3k\pm 1$ for a positive integer $k$ is
not dynamically $3$-colorable \cite{Montgomery01}.
Moreover, for any dynamically $3$-colorable graph, we can attach an
ear of length $3k$ whose ends are adjacent to forbid it to be
dynamically $3$-colorable. This suggests that probably it is not
easy to characterize exceptional graphs for $t=4$.

It remains to consider the question for $ t\geq 6$; the authors are
unaware of any graph having no $K_{t}$ minor but not dynamically
$(t-1)$-colorable for $t\geq 6$.
Probably with more colors available, it may be easier to color the
graph dynamically.

As a small evidence, let us consider apex graphs or more generally,
graphs that can be made planar by removing at most $k$ vertices.
If $G\setminus X$ is planar
for a set $X$ of $k$ vertices, then $G$ has no $K_{k+5}$ minor.
We prove that those graphs are dynamically $(k+4)$-colorable.
\begin{PROP}
  Let $k$ be a positive integer.
  If $G\setminus X$ is planar for a set $X$ of $k$ vertices, 
  then $G$ is dynamically $(k+4)$-colorable.
\end{PROP}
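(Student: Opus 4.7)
I would reserve the first four colors $\{1,2,3,4\}$ for the planar part $H := G \setminus X$ and use the remaining $k$ colors as private colors, one for each vertex of $X$. Specifically, I would apply Theorem~\ref{thm:planar} to each component of $H$ to obtain a dynamic $4$-coloring $c_H$ using $\{1,2,3,4\}$; any component isomorphic to $C_5$ would be handled separately by borrowing one of the fresh colors $\{5,\ldots,k+4\}$, which is available because $k+4 \geq 5$. I would then assign each remaining $x \in X$ a distinct color from $\{5,\ldots,k+4\}$. Since the palette used on $V(H)$ and the palette used on $X$ are essentially disjoint, the combined coloring of $G$ is automatically proper.

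A vertex $v \in V(H)$ with $\deg_G(v) \geq 2$ is happy for the following reason: if $v$ has any neighbor in $X$, then that fresh-colored neighbor's color differs from every other neighbor's color (fresh colors are distinct and disjoint from $\{1,2,3,4\}$); otherwise all of $v$'s neighbors lie in $V(H)$ and $v$ is happy by the dynamic property of $c_H$. A vertex $x \in X$ with $\deg_G(x) \geq 2$ is similarly happy unless \emph{every} neighbor of $x$ lies in $V(H)$ and $c_H(N_G(x))$ is monochromatic; in that case $N_G(x)$ is necessarily an independent set in $H$.

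To dispose of these potentially unhappy ``pure apex'' vertices I would proceed by induction on $k$. For a problematic $x_0 \in X$, the graph $G - x_0$ has $X \setminus \{x_0\}$ as a set of $k-1$ apex vertices relative to the same planar $H$, so the inductive hypothesis produces a dynamic $(k+3)$-coloring of $G - x_0$ that I would extend by assigning $x_0$ the new color $k+4$. Each $v \in V(G - x_0)$ remains happy, since a brand-new color can only increase the color diversity in $v$'s neighborhood. The delicate point is making $x_0$ itself happy; for this I would strengthen the inductive hypothesis to promise that, for a prescribed independent set $I \subseteq V(H)$ of size at least two (here $I = N_G(x_0)$), the coloring may be chosen with $c(I)$ non-monochromatic. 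Such a strengthening is enforced by a Kempe-chain swap in $c_H$ whenever the initial $c_H(I)$ happens to be monochromatic.

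The principal obstacle is exactly the Kempe-chain step: it must simultaneously preserve properness of $c_H$, preserve the dynamic property on all of $V(H)$, and desaturate $I$. Properness is automatic for any Kempe swap, but dynamic-ness along the boundary of the swapped two-color component requires local case analysis using the planar structure of $H$, and the swap must be chosen so that the two-color component containing a representative of $I$ does not absorb all of $I$ (otherwise the swap leaves $I$ monochromatic). The base case $k=1$, together with the sub-case where $H$ contains a $C_5$ component, consumes the combinatorial bulk of the argument; in those situations the fresh color on $X$ supplies exactly the extra flexibility needed to desaturate the bad monochromatic neighborhoods.
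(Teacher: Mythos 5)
Your first stage (private fresh colors on $X$, Theorem~\ref{thm:planar} on the components of $H$) correctly reduces the problem to the apex vertices whose entire neighborhood lies in $V(H)$ and is monochromatic, but the mechanism you propose for those vertices is exactly where the proof is missing, and you say so yourself. A Kempe swap on two colors of $c_H$ preserves properness but does \emph{not} in general preserve the dynamic property: a vertex adjacent to the swapped component whose neighborhood was bichromatic only in the two swapped colors can become unhappy, and you give no argument that a swap exists which both avoids this everywhere and fails to absorb all of the prescribed independent set $I$. Moreover, the strengthened induction as stated does not close: at level $k$ you must satisfy the set $I$ prescribed to you \emph{and} the set $N_G(x_0)$ for the vertex you peel off, so the recursive call needs two prescribed sets, and unwinding the recursion you would need a planar dynamic $4$-coloring avoiding monochromaticity on up to $k+1$ prescribed independent sets simultaneously --- a statement nothing in the paper (or your argument) supplies. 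So this is a genuine gap, not a routine verification.

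The paper's proof avoids all of this with a much lighter touch. It inducts on $\abs{V(G)}+k$, peels off a single $v\in X$ (arranged to have at least two neighbors), and takes a dynamic $(k+3)$-coloring $c_1$ of $G\setminus v$ by induction. If $N(v)$ is monochromatic in $c_1$, say all of color $1$, it does not reroute a Kempe chain: it simply recolors \emph{one} neighbor $w$ of $v$. Since the color $k+4$ is brand new, giving it to $w$ (or to $v$) keeps the coloring proper for free and can only increase the color diversity seen by any vertex other than $w$; the only thing to check is that $w$ itself stays happy, which is a short case analysis on whether $w$ has at least three neighbors in $G$ (then set $c(w)=k+4$, $c(v)=2$, and $w$ keeps a non-monochromatic neighborhood because $c_1$ was dynamic) or at most two (then choose $c(w)=d$ avoiding the colors of $w$'s one or two other neighbors and set $c(v)=k+4$). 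If you want to salvage your write-up, replacing the Kempe-chain step and the strengthened induction with this ``recolor one neighbor with the fresh color'' device is the repair.
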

\begin{proof}
  We proceed by induction on $\abs{V(G)}+k$.
  
  Let $v\in X$. 
  We may assume that $G\setminus (X\setminus \{v\})$ is nonplanar and
  therefore $v$ has at least two neighbors.

  If $G\setminus v$ is isomorphic to $C_5$, then $G$ is planar and
  therefore $G$ is dynamically $4$-colorable.
  Thus we may assume that $G\setminus v$ is not isomorphic to $C_5$.
  By the induction hypothesis, $G\setminus v$ is dynamically
  $(k+3)$-colorable.
  Let $c_1$ be a dynamic $(k+3)$-coloring of $G\setminus v$.

  Let $N$ be the set of all neighbors of $v$.
  If $N$ is not monochromatic in $c_1$, then 
  we win because $c$  is  a dynamic $(k+4)$-coloring of $G$.
  Thus we may assume that 
  every vertex in $N$ is colored by $1$ in $c_1$.
  Let us pick a neighbor $w$ in $N$.

Suppose that every neighbor of $v$ has at least three neighbors in $G$.
  Let 
  \[
  c(x)=
  \begin{cases}
    2  & \text{if }x=v,\\
    k+4 & \text{if }x=w,\\
    c_1(x) & \text{if }x\in V(G)\setminus\{v,w\}.
  \end{cases}
  \]
  We claim that $c$ is a dynamic $(k+4)$-coloring of $G$. 
  It is easy to see that $c$ is a proper coloring.
  Moreover, $v$ is dynamic with $c$ because its neighbors have color
  $1$ and $k+4$ in $c$.
  A vertex $y\neq v,w$ is dynamic with $c$
  because it was dynamic with $c_1$.
  A neighbor $x$ of $v$ other than $w$ is dynamic with $c_1$
  because it has a neighbor of color $2$
  and a neighbor of color other than $2$.
  The vertex $w$ is dynamic with $c_1$ because not all neighbors of $w$
  other than $v$ are colored with $2$ due to the assumption that $c_1$
  is a dynamic $(k+3)$-coloring of $G\setminus v$.
  This proves the claim. 

Now we may assume that $v$ has a neighbor $w$ having at most two neighbors in $G$.
   \begin{itemize}
  \item   If $w$ has a neighbor $w_1$ other than $v$
    and $w_1$ has  a neighbor $w_2$ other than $w$, 
    then let $d\in\{2,\ldots,k+3\}\setminus\{c_1(w_1), c_1(w_2)\}$.
  \item If $w$ has  a neighbor $w_1$ other than $v$ and $w_1$ has no
    neighbor other than $w$, 
    then let $d\in \{2,\ldots,k+3\}\setminus\{c_1(w_1)\}$.
  \item If $w$ has no neighbor other than $v$, then 
    let $d=2$.
  \end{itemize}
  Let 
  \[
  c(x)=
  \begin{cases}
    k+4 & \text{if }x=v,\\
    d & \text{if }x=w,\\
    c_1(x) & \text{if } x\in V(G)\setminus\{v,w\}.
  \end{cases}
  \]
  Then it is easy to check that $c$ is a dynamic $(k+4)$-coloring of $G$.
\end{proof}

We also prove that 
a graph with no
$K_t$ minor is dynamically $f(t)$-colorable for some function $f$.
To show this, we prove the following lemma.
A graph $H$ is a \emph{topological minor} of $G$
if $G$ has a subgraph isomorphic to a subdivision of $H$.
\begin{LEM}\label{lem:degenerate}
  For a positive integer $k$, let $G$ be a graph such that every topological minor of $G$ has a vertex
  of degree at most $k$.
  Then $G$ is dynamically $(k+3)$-colorable.
\end{LEM}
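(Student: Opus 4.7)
The plan is to prove the lemma by strong induction on $\abs{V(G)}$. The base case $\abs{V(G)} \leq 1$ is immediate. For the inductive step, since $G$ is a topological minor of itself, the hypothesis supplies a vertex of degree at most $k$, and I choose such a vertex $v$ of minimum degree in $G$; I then split into three cases according to $\deg(v)$.

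If $\deg(v) \leq 1$, I delete $v$ and apply induction to $G - v$, which, as a subgraph of $G$, is a topological minor and therefore satisfies the hypothesis. Re-inserting $v$, I choose $c(v)$ to avoid the color of its unique neighbor $u$ (if any) and, when $\deg_G(u) = 2$, also the color of $u$'s other $G$-neighbor; at most two colors are forbidden and $k + 3 \geq 3$ is enough. If $\deg(v) = 2$ with neighbors $u_1, u_2$, then $\delta(G) = 2$ forces $k \geq 2$. I suppress $v$ (delete $v$ and add $u_1 u_2$ if that edge was missing) to obtain $G'$; since $G$ is a subdivision of $G'$, the graph $G'$ is a topological minor of $G$, is strictly smaller, and satisfies the hypothesis. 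Induction yields a dynamic $(k+3)$-coloring $c'$ of $G'$ with $c'(u_1) \neq c'(u_2)$. I extend by picking $c(v)$ outside $\{c'(u_1), c'(u_2), b_1, b_2\}$, where $b_i$ (when defined) is the common $c'$-color of $N_G(u_i) \setminus \{v\}$ whenever that set is monochromatic; a short case analysis using the happiness of $u_i$ in $(G', c')$ shows that $\{c'(u_1), c'(u_2), b_1, b_2\}$ has at most four elements, and $k + 3 \geq 5$ admits a valid choice.

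The final and hardest case is $\deg(v) \geq 3$, which implies $\delta(G) \geq 3$ and hence $k \geq 3$. I delete $v$ and apply induction to $G - v$ to obtain a dynamic $(k+3)$-coloring $c'$. Every neighbor $u$ of $v$ has $\deg_{G-v}(u) \geq 2$ and is already happy in $c'$, so $u$ remains happy in $G$ under any extension; the only remaining worry is the happiness of $v$ itself. If $\abs{c'(N(v))} \geq 2$, then $v$ is happy for free, and any $c(v)$ avoiding the at most $k$ colors of $N(v)$ works, leaving at least three colors. The main obstacle is the sub-case where $c'$ is monochromatic on $N(v)$ with common value $a$: here I would recolor some $u_1 \in N(v)$ from $a$ to a new color $b$ and then color $v$ outside $\{a, b\}$. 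The color $b$ must satisfy $b \neq a$ (so that $N(v)$ becomes bichromatic), $b \neq c'(w)$ for every $w \in N_{G-v}(u_1)$ (keeping $u_1$ properly colored), and $b \neq q_w$ for each $w \in N_{G-v}(u_1)$ whose remaining $G$-neighbors are monochromatic of color $q_w$ (keeping $w$ happy). The delicate part, and the main obstacle, is showing that these constraints leave at least one admissible $b$ among the $k + 3$ colors; this likely requires a careful choice of $u_1$ — for example, one of locally small degree, or chosen by exploiting the topological-minor hypothesis beyond bare $k$-degeneracy — or a Kempe-style swap on a bichromatic subgraph, rather than the crude pointwise recolor sketched here.
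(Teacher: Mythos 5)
Your induction setup and your treatment of the degree-$1$ and degree-$2$ cases match the paper's proof (the paper contracts $vv_1$ where you suppress $v$, but these produce the same graph and the same $4$-forbidden-color count against $k+3\ge 5$). The problem is the case $\deg(v)\ge 3$, which you correctly identify as the crux and then leave open. Deleting $v$ and hoping to repair a monochromatic $N(v)$ by recoloring a single neighbor $u_1$ does not go through as sketched: $u_1$ may have up to $k$ neighbors $w$ in $G-v$, and each can forbid two colors for the new color $b$ (its own color $c'(w)$, and the color $q_w$ if recoloring $u_1$ would leave $w$'s neighborhood monochromatic), so you face up to $2k+1$ constraints plus $b\neq a$ against only $k+3$ colors. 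No choice of $u_1$ by ``locally small degree'' rescues this in general, and you give no Kempe argument. So as written there is a genuine gap in the hardest case.

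The missing idea, which is the one place the paper really uses the \emph{topological minor} hypothesis rather than bare $k$-degeneracy, is to apply induction not to $G\setminus v$ but to $G_1=(G\setminus v)+v_1v_2$. This graph is a topological minor of $G$ (delete all edges at $v$ except $vv_1$ and $vv_2$, then suppress $v$), so the hypothesis is inherited and induction applies. Properness of the coloring of $G_1$ then \emph{forces} $c_1(v_1)\neq c_1(v_2)$, so $N(v)$ is automatically non-monochromatic and $v$ is happy for free; one only needs to choose $c(v)$ avoiding the at most $d+2\le k+2$ colors $c_1(v_1),\dots,c_1(v_d),c_1(w_1),c_1(w_2)$, which keeps $v_1,v_2$ happy (each sees $v$ and $w_i$ with distinct colors) while $v_3,\dots,v_d$ retain degree at least $2$ in $G_1$ and stay happy. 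Adding this one edge before deleting $v$ eliminates the entire recoloring difficulty you ran into; without it (i.e., under mere degeneracy) the statement is actually false, as the subdivided $K_n$ example in the paper shows.
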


Note that unlike the usual coloring of graphs, $k$-degenerate graphs
have unbounded dynamic chromatic number. 
(A graph is \emph{$k$-degenerate} if every subgraph has a vertex of
degree at most $k$.)
For instance, if $G$ is a
graph obtained from $K_n$ by subdividing each edge once, then
$G$ is $2$-degenerate and yet
$\chi_d(G)\ge n$.

\begin{proof}
  We may assume that $G$ is simple connected and $k\ge 2$.
  We proceed by induction on $\abs{V(G)}$.
  We may assume that $\abs{V(G)}\ge 2$.
  Let $v$ be a vertex of minimum degree.
  Let $v_1,v_2,\ldots,v_d$ be neighbors of $v$.

  If $v$ has degree $1$, then 
  we can pick a color for $v$ that are not used in $v_1$
  and one of the neighbors of $v_1$.
 So we may assume that every vertex of $G$ has degree at least $2$.
  Let $w_1$ be a neighbor of $v_1$ in $G\setminus v$.
  Let $w_2$ be a neighbor of $v_2$ in $G\setminus v$.

  If $v$ has degree $2$, then 
  let $G_1=G/vv_1$.
  Let $c_1$ be a dynamic $(k+3)$-coloring of $G_1$. Note that $G_1$ is a
  topological minor of $G$ and therefore $G_1$ satisfies our
  assumption that each of its topological minors has a vertex of
  degree at most $k$.  Since $v_1$ and $v_2$ are adjacent,
  $c_1(v_1)\neq c_1(v_2)$. 
  Let $f$ be a color in $\{1,2,\ldots,k+3\}\setminus \{c_1(v_1),
  c_1(v_2),c_1(w_1), c_1(w_2)\}$. Since $k\ge 2$, it is possible to
  choose $f$. Let 
  \[
  c(x)=
  \begin{cases}
    f &\text{if }x=v,\\
    c_1(x)&\text{if }x\in V(G)\setminus\{v\}.
  \end{cases}
  \]
  Then $c$ is a dynamic $(k+3)$-coloring of $G$.

  We may now assume that every vertex of $G$ has degree at least $3$.
  Let $G_1=(G\setminus v) + v_1v_2$. Notice that $G_1$ is a topological
  minor of $G$ as we can remove all edges incident with $v$ except
  $vv_1$ and $vv_2$ and then contract $vv_1$.
  By the induction hypothesis, $c_1$ is dynamically $(k+3)$-colorable.
  Let $f$ be a color in $\{1,2,\ldots,k+3\}\setminus (\{c_1(v_1), c_1(v_2),\ldots,c_1(v_d),c_1(w_1), c_1(w_2)\})$. Since $k+3\ge d+2$, it is possible to
  choose $f$. Let 
  \[
  c(x)=
  \begin{cases}
    f &\text{if }x=v,\\
    c_1(x)&\text{if }x\in V(G)\setminus\{v\}.
  \end{cases}
  \]
  Then $c$ is a dynamic $(k+3)$-coloring of $G$. First of all, $v$ is
  dynamic with $c$ because $v_1$ and $v_2$ are adjacent in $G_1$
  and so $c_1(v_1)\neq c_1(v_2)$.
  Secondly, for $i=1,2$, $v_i$ is dynamic with $c$ because $w_i$ and $v$
  are neighbors of $v_i$ having distinct colors.
  Third, for $i=3,4,\ldots,d$, the vertex $v_i$ is dynamic with $c$
  because $v_i$ has degree at least $2$ in $G_1$ and therefore $v_i$
  has a pair of neighbors having distinct colors in $c_1$.
  Finally, all other vertices are dynamic with $c$ because they were
  dynamic with $c_1$ in $G_1$.
\end{proof}

We also use known results in order to show our result. Bollob\'as and Thomason \cite{BT1998} and Koml\'os and Szemer\'edi~\cite{KS1996b}
  proved that there is an absolute constant $c$ such that, for every $t\geq 2$, each graph having
  average degree at
  least $ct^2$ contains $K_t$ as a topological minor. A theorem by Thomas and
  Wollan~\cite{TW2005}  implies that $c$ can be taken to be $10$,
  explained in the textbook of Diestel~\cite{Diestel2010}.
  Thus, we have that, for every $t\geq 2$,
  \begin{equation}\label{eq:topological minor} 
  \mbox{each graph with no $K_{t}$ topological minor has a vertex of
  degree less than $ 10t^2 $.}
  \end{equation}

 There are similar results for minors. Mader~\cite{Mader1968} proved that if an $n$-vertex graph has at
  least $(8t\sqrt{\log_2 t})n$ edges, then it has $K_t$ minor.
  This gives that, for every $t\geq 2$,
  \begin{equation}\label{eq:minor} \mbox{each graph with no $K_t$ minor has a vertex of degree
  less than $16t\sqrt{\log_2 t}$.} \end{equation}
 (Later Thomason~\cite{Thomason2001} showed that 
  this constant $8$ can be improved to $0.2656\ldots+o_t(1)$ where $o_t(1)\rightarrow 0$ as $t\rightarrow \infty$. Note that this
  constant is asymptotically tight.)

We are ready to show Theorem~\ref{thm:large t}.

\begin{proof}[Proof of Theorem~\ref{thm:large t}]
  \textit{(i)}: Let $G$ be a graph with no $K_t$ topological minor. Clearly, every topological minor of $G$ has no $K_t$ topological minor. From~\eqref{eq:topological minor}, every topological minor of $G$ has a vertex of degree less than $10t^2$. Hence, Lemma~\ref{lem:degenerate} implies \textit{(i)}.
  
  \textit{(ii)}: Let $G$ be a graph with no $K_t$ minor. Clearly, every minor of $G$ has no $K_t$ minor. In particular, every topological minor of $G$ has no $K_t$ minor. From~\eqref{eq:minor}, every topological minor of $G$ has a vertex of degree less than $16t\sqrt{\log_2 t}$. Hence, Lemma~\ref{lem:degenerate} yields \textit{(ii)}.
 \end{proof}

 \end{document}